\documentclass[11pt,centertags,leqno]{amsart}
\usepackage[latin1]{inputenc}
\usepackage[english]{babel}
\usepackage{cite}
\usepackage{verbatim}
\usepackage{color}
\usepackage{pgf,tikz}
\usetikzlibrary{arrows,shapes,snakes}
\usepackage{yhmath}
\usepackage{graphicx,picture}
\usepackage{amsmath,amsthm,amsopn,amstext,amscd,amsfonts,amssymb}
\usepackage{comment}
\usepackage{hyperref}
\usepackage{enumitem}
\setlist[enumerate]{leftmargin=*}
\usepackage[top=2.5cm, bottom=2.5cm, left=3cm,right=3cm]{geometry}

\newcommand{\diesis}{^\#}

\newtheorem{theo}{Theorem}[section]
\newtheorem{lemma}{Lemma}[section]
\newtheorem{prop}{Proposition}[section]
\newtheorem{cor}{Corollary}[section]

\theoremstyle{definition}

\newtheorem{rem}{Remark}[section]

\numberwithin{equation}{section}

\newcommand{\R}{\mathbb R}
\newcommand{\de}{\partial}
\newcommand{\eps}{\varepsilon}

\newcommand{\ds}{\displaystyle}
\newcommand{\mc}{\mathcal}
\newcommand{\dx}{\,dx}

\DeclareMathOperator{\esssup}{ess\,sup}


\begin{document}
\title[]{Some remarks on a shape optimization problem}
\author[F. Della Pietra]{Francesco Della Pietra}
\address{Francesco Della Pietra \\
 U\-ni\-ver\-si\-t\`a de\-gli stu\-di di Na\-po\-li ``Fe\-de\-ri\-co II''\\
Di\-par\-ti\-men\-to di Ma\-te\-ma\-ti\-ca e Ap\-plica\-zio\-ni 
``R.~Cac\-ciop\-po\-li''\\
Via Cintia, Complesso di Monte Sant'An\-ge\-lo\\
I-80126 Napoli, Italia.
}
\email{f.dellapietra@unina.it}

\keywords{Eigenvalues; Shape optimization; Symmetrization}
\subjclass[2010]{35P15, 49R50}
\date{{\today}}
\maketitle

\begin{abstract} Given $\Omega$ bounded open set of $\R^{n}$ and $\alpha\in \R$, let us consider
\[
\mu(\Omega,\alpha)=\min_{\substack{v\in W_{0}^{1,2}(\Omega)\\v\not\equiv 0}} 
\frac{\ds\int_{\Omega} |\nabla v|^{2}dx+\alpha \left|\ds\int_{\Omega}|v|v\,dx \right|}{\ds\int_{\Omega} 
|v|^{2}dx}.
\] 
We study some properties of $\mu(\Omega,\alpha)$ and of its minimizers, and, depending on $\alpha$, we determine the set $\Omega_{\alpha}$ among those of fixed measure such that $\mu(\Omega_{\alpha},\alpha)$ is the smallest possible.
\end{abstract}

\section{Statement of the problem and main result}
Let $\Omega$ be a bounded open set of $\R^{n}$, $n\ge 2$,
and consider the following minimum problem
\begin{equation}
	\label{pb}
\mu(\Omega,\alpha)=\min_{\substack{v\in W_{0}^{1,2}(\Omega)\\v\not\equiv 0}} \mc Q(v,\alpha)
\end{equation}
where  $\alpha$ is a fixed real number and
\[
\mc Q(v,\alpha)=\frac{\ds\int_{\Omega} |\nabla v|^{2}dx+\alpha \left|\ds\int_{\Omega}|v|v\,dx \right|}{\ds\int_{\Omega} 
|v|^{2}dx}.
\] 
The objective of this paper is to study some properties of $\mu(\Omega,\alpha)$ and of its minimizers. Moreover, we aim to determine and to characterize the sets $\tilde\Omega$ among those of fixed measure such that $\mu(\tilde\Omega,\alpha)$ is the smallest possible. As we will show, the shape of $\tilde\Omega$ depends on 
$\alpha$. More precisely if we denote, as usual, by $\omega_{n}$ the measure of the unit ball in $\R^{n}$, and by $j_{n/2-1,1}$ the first zero of the Bessel function of first kind of order $n/2-1$, the main result of the paper is the following.
\begin{theo}
\label{mainthm}
Let $n\ge 2$. There exists a positive number
\[
\alpha_{c} = j^{2}_{n/2-1,1}\omega_{n}^{\frac 2n}
\left[2^{\frac 2 n} -1\right]
\]
such that, for every bounded open set $\Omega\subset \R^{n}$ and for every $\alpha\in \R$, it holds 
\begin{equation}
\label{mainin}
	 \mu (\Omega,\alpha)
		\ge
	\begin{cases}
	\mu(\Omega\diesis,\alpha) &\text{if }\alpha|\Omega|^{\frac 2n}\le \alpha_{c},\\[.2cm]
	\frac{2^{2/n}\omega_{n}^{2/n} j_{n/2-1,1}^{2}}{|\Omega|^{2/n}}& \text{if }	\alpha|\Omega|^{\frac 2n}\ge \alpha_{c},
	\end{cases}
\end{equation}
where $\Omega\diesis$ is the ball centered at the origin with measure $|\Omega\diesis|=|\Omega|$. If the equality sign holds when $\alpha|\Omega|^{\frac 2n}< \alpha_{c}$, then $\Omega$ is a ball. If the equality sign holds when $\alpha|\Omega|^{\frac 2n}> \alpha_{c}$, then $\Omega$ is  the union of two disjoint balls of equal measure.
If $\alpha|\Omega|^{\frac 2n}=\alpha_{c}$ and the equality sign holds, $\Omega$ is a ball or the union of two disjoint balls of equal measure.
\end{theo}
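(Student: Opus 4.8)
The plan is to bound $\mathcal Q(v,\alpha)$ from below, for an arbitrary competitor $v\in W_0^{1,2}(\Omega)$, by a quantity depending only on the measures and the $L^{2}$--masses of the positive and negative parts of $v$, to minimise that quantity explicitly, and to recognise that its minimum is attained exactly at the two configurations appearing in the statement. Throughout I put $V=|\Omega|$ and $c_n=j_{n/2-1,1}^{2}\,\omega_n^{2/n}$, and I write $\lambda_1(\cdot)$ for the first Dirichlet eigenvalue, so that a ball $B$ has $\lambda_1(B)=c_n|B|^{-2/n}$ and $\alpha_c=c_n\bigl(2^{2/n}-1\bigr)$.

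\emph{Reduction to a finite--dimensional problem.} First I would write, given $v\not\equiv0$, $v=v^+-v^-$ and set $a=\int_\Omega(v^+)^2\dx$, $b=\int_\Omega(v^-)^2\dx$, $p=|\{v>0\}|$, $q=|\{v<0\}|$; since $\mathcal Q(\cdot,\alpha)$ is even (it is unchanged under $v\mapsto -v$, which swaps $a\leftrightarrow b$, $p\leftrightarrow q$) I may assume $a\ge b$. Using the standard identities $\int_\Omega|\nabla v|^2\dx=\int_\Omega|\nabla v^+|^2\dx+\int_\Omega|\nabla v^-|^2\dx$ and $\int_\Omega|v|v\dx=a-b$,
\[
\mathcal Q(v,\alpha)=\frac{\int_\Omega|\nabla v^+|^2\dx+\int_\Omega|\nabla v^-|^2\dx+\alpha(a-b)}{a+b}.
\]
Applying the Faber--Krahn inequality to $v^+$ and to $v^-$ (Schwarz symmetrise each part — this preserves its $L^2$--mass and does not increase its Dirichlet energy — then bound the resulting Rayleigh quotient by the first eigenvalue of the relevant ball) yields $\int_\Omega|\nabla v^+|^2\dx\ge c_n\,p^{-2/n}a$ and $\int_\Omega|\nabla v^-|^2\dx\ge c_n\,q^{-2/n}b$, with $p+q\le V$. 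Hence, if $v$ has constant sign ($b=0$) then $\mathcal Q(v,\alpha)\ge\lambda_1(\Omega)+\alpha\ge c_nV^{-2/n}+\alpha$, and in general
\[
\mathcal Q(v,\alpha)\ \ge\ \frac{c_n\,p^{-2/n}a+c_n\,q^{-2/n}b+\alpha(a-b)}{a+b},\qquad p+q\le V .
\]

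\emph{The explicit minimisation.} Then, for a genuinely sign--changing $v$ (so $a\ge b>0$, $p,q>0$), I would observe that the quotient above decreases in $p$ and in $q$, hence I may take $p+q=V$; fixing $q$ and setting $A=c_n(V-q)^{-2/n}+\alpha$, $B=c_n\,q^{-2/n}-\alpha$, it equals $(Aa+Bb)/(a+b)$, which as a monotone function of $b/a\in(0,1]$ is $\ge\tfrac{A+B}{2}$ if $A\ge B$ and $\ge A$ if $A<B$. Since $A\ge c_nV^{-2/n}+\alpha$ (equality only as $q\to0$) and, by convexity of $t\mapsto t^{-2/n}$,
\[
\frac{A+B}{2}=\frac{c_n}{2}\Bigl[(V-q)^{-2/n}+q^{-2/n}\Bigr]\ \ge\ c_n\Bigl(\tfrac V2\Bigr)^{-2/n}=2^{2/n}c_nV^{-2/n}
\]
(equality only at $q=V/2$), combining the cases gives
\[
\mu(\Omega,\alpha)\ \ge\ \min\bigl\{\,c_nV^{-2/n}+\alpha,\ 2^{2/n}c_nV^{-2/n}\,\bigr\}.
\]
An elementary computation shows $c_nV^{-2/n}+\alpha\le 2^{2/n}c_nV^{-2/n}$ iff $\alpha V^{2/n}\le\alpha_c$, so this already yields \eqref{mainin} once one notes that in the first range $\mu(\Omega\diesis,\alpha)=c_nV^{-2/n}+\alpha$: testing with the first eigenfunction of $\Omega\diesis$ gives ``$\le$'', and the displayed bound applied to $\Omega\diesis$ itself gives ``$\ge$''.

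\emph{Equality cases.} To treat equality I would suppose equality holds in \eqref{mainin} and fix a minimiser $v$ in \eqref{pb}. If $\alpha V^{2/n}<\alpha_c$ then $\mathcal Q(v,\alpha)=c_nV^{-2/n}+\alpha$, while any sign--changing competitor obeys $\mathcal Q\ge\min\{A,\tfrac{A+B}{2}\}>c_nV^{-2/n}+\alpha$ (because $q\in(0,V)$ forces $A>c_nV^{-2/n}+\alpha$, and $\tfrac{A+B}{2}\ge2^{2/n}c_nV^{-2/n}>c_nV^{-2/n}+\alpha$); hence $v$ has constant sign, $\lambda_1(\Omega)=c_nV^{-2/n}=\lambda_1(\Omega\diesis)$, and $\Omega$ is a ball by the equality case of Faber--Krahn. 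If $\alpha V^{2/n}>\alpha_c$ then $\mathcal Q(v,\alpha)=2^{2/n}c_nV^{-2/n}$, and a constant--sign competitor would give $\mathcal Q\ge c_nV^{-2/n}+\alpha>2^{2/n}c_nV^{-2/n}$, so $v$ changes sign; chasing equalities then forces $p+q=V$, equality in Faber--Krahn on $\{v>0\}$ and on $\{v<0\}$ (so each is a ball and $v^\pm$ its first eigenfunction), $A>B$, $a=b$, and finally $q=V/2$ by strict convexity — that is, $\Omega$ is the union of two disjoint balls of equal measure. When $\alpha V^{2/n}=\alpha_c$ the two values coincide and both a constant--sign and a sign--changing minimiser can realise the bound, yielding a ball or two equal balls respectively. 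The step I expect to be most delicate is the explicit finite--dimensional minimisation: one must keep the constraint $a\ge b$ in sight, decide which of the branches ($a=b$ or $b\to0$) is active for a given $q$, and check that the two branches meet precisely at $\alpha V^{2/n}=\alpha_c$, so that the equality discussion closes cleanly.
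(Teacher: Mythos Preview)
Your argument is correct and follows a more direct route than the paper's. The paper first develops a structural characterisation $\mu(\Omega,\alpha)=\min\{\lambda_\Delta(\Omega)+\alpha,\lambda_T(\Omega)\}$ via the Euler--Lagrange equations (Lemma~2.1, Proposition~2.2), then reduces to unions of two balls by symmetrisation (Proposition~3.1), and finally invokes Nazarov's isoperimetric inequality for $\lambda_T$ to single out the optimal pair. You bypass all of this: you apply Faber--Krahn to $v^+$ and $v^-$ separately and perform an explicit minimisation over the four scalars $(a,b,p,q)$, which simultaneously handles the reduction to two balls and the optimisation among them. The convexity step $\tfrac12\bigl[(V-q)^{-2/n}+q^{-2/n}\bigr]\ge(V/2)^{-2/n}$ is in effect Nazarov's inequality specialised to this situation, so you recover it without citing it.

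What this buys you is a self-contained proof that never mentions $\lambda_T$ or the Euler--Lagrange system; what the paper's route buys is the structural information $\mu(\Omega,\alpha)=\min\{\lambda_\Delta(\Omega)+\alpha,\lambda_T(\Omega)\}$ for \emph{every} $\Omega$, which is of independent interest and makes the picture in Figure~1 transparent. Your equality analysis is sound: in the sign-changing branch you correctly note that $A(q)>c_nV^{-2/n}+\alpha$ is always strict for $q>0$, so below $\alpha_c$ only constant-sign minimisers survive, while above $\alpha_c$ the chain of equalities (P\'olya--Szeg\H{o} on each part, $p+q=V$, $q=V/2$ by strict convexity, $a=b$ since $A>B$) forces two equal balls. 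Both your proof and the paper's rely on Brothers--Ziemer for the rigidity step and treat the passage from ``$|\Omega\setminus(B_1\cup B_2)|=0$'' to ``$\Omega=B_1\cup B_2$'' at the same level of detail.
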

On the other hand, the above result provides the best constant $\mu(\tilde\Omega,\alpha)$ in the corresponding Sobolev-Poincar\'e inequality:
\[
\mu(\tilde\Omega,\alpha)\int_{\Omega}|v|^{2} \dx \le \int_{\Omega} |\nabla v|^{2}\dx +\alpha \left|\int_{\Omega} |v|v\dx\right|,\quad v\in W_{0}^{1,2}(\Omega),
\]
among all the open bounded sets $\Omega$ with fixed measure.

Let us observe that when $\alpha=0$, the above inequality reduces to the classical Poincar\'e inequality. Moreover, $\mc Q(\Omega,0)$ is the Rayleigh quotient associated to the Dirichlet Laplacian eigenvalue problem, and $\mu(\Omega,0)$ corresponds to its first eigenvalue in $\Omega$. Then, it is well known the Faber-Krahn inequality:
\[
\mu(\Omega,0)\ge \mu(\Omega\diesis,0).
\]
Moreover if the equality replaces the inequality, then $\Omega$ is a ball. 

The problem of finding the optimal shape of set-dependent functionals is largely studied in many settings. Several results can be found for example in \cite{hen}, related to eigenvalue problems, or in \cite{ma}. Recent results are contained for example in 
\cite{brkj,bfnt11,efknt,gn11,bdnt14,dpgrobin,%
dpgtors,dpgccm,dpgmonge,dpg2,dpg1,n12,chdb12}. Moreover, we recall that in \cite{bfnt11} a result analogous to Theorem \ref{mainthm} is given for the functional
\[
\tilde\lambda(\Omega,\alpha)=\min_{\substack{v\in W_{0}^{1,2}(\Omega)\\v\not\equiv 0}} 
\frac{\ds\int_{\Omega} |\nabla v|^{2}dx+\alpha \left(\ds\int_{\Omega}v\,dx \right)^{2}}{\ds\int_{\Omega} 
|v|^{2}dx},
\]
which is related to a nonlocal eigenvalue problem. It has been proved that there exists a threshold positive value $\tilde \alpha$ such that if $\alpha<\tilde\alpha$, the minimum of $\tilde \lambda(\Omega,\alpha)$ among the sets with fixed measure is attained at one ball, while for $\alpha$ greater than $\tilde \alpha$, such minimum is given at two balls of equal measure.

The paper is organized as follows. In Section 2, we recall some basic results on Schwarz symmetrization and on the Dirichlet Laplacian. Moreover, depending on $\alpha$, we give some properties of $\mu(\Omega,\alpha)$ and its minimizers. 
Finally, in Section 3 we give the proof of the main result.

\section{Notation and preliminary results}
\subsection{Schwarz symmetrization}
Let $\Omega$ be a bounded open set of $\R^{n}$ and $u\colon\Omega\rightarrow \R$ be a measurable function, and denote by $\Omega\diesis$ the ball centered at the origin with the same Lebesgue measure of $\Omega$. The Schwarz rearrangement of $u$ is the spherically symmetric decreasing function
\[
u\diesis\colon \Omega\diesis\rightarrow [0,+\infty[
\]
whose level sets are balls have the same measure of the level sets of $|u|$, that is 
\[
|\{u\diesis>t\}|=|\{ |u|>t\}|, \quad t\ge 0.
\]
The Schwarz symmetrization enjoys the following properties.
\begin{enumerate}[label=\emph{\alph*}),ref=\emph{\alph*}]
\item By definition, $u\diesis$ preserves the $L^{p}$-norm of $u$:
\begin{equation}
\label{lp}
\|u\|_{L^{p}(\Omega)}= \|u\diesis\|_{L^{p}(\Omega\diesis)},\quad 1\le p \le +\infty.
\end{equation}
\item The P\'olya-Szego inequality holds: if $u\in W_{0}^{1,2}(\Omega)$ is a nonnegative function, then
\begin{equation}
	\label{ps}
	\int_{\Omega} |\nabla u|^{2}\dx \ge 	\int_{\Omega\diesis} |\nabla u\diesis|^{2}\dx.
\end{equation}
Moreover, if the above inequality becomes an equality, and
\[
|\{|\nabla u\diesis|=0\}\cap (u\diesis)^{-1}(0,\esssup u)|=0,
\]
then, up to translations, $\Omega=\Omega\diesis$ and $u=u\diesis$ almost everywhere (see \cite{bz88}). 

For an exhaustive treatment on rearrangements and symmetrization, we refer the reader, for example, to \cite{k}. 
\end{enumerate}
\subsection{Some basic facts for the Dirichlet Laplacian}
Given $G\subset \R^{n}$ bounded open set, throughout the paper we will denote by $\lambda_\Delta (G)$ the first Dirichlet-Laplace eigenvalue relative to $G$: 
\begin{equation}
\label{dirlap}
 \lambda_\Delta (G)= \min_{v\in W_{0}^{1,2}(G)\setminus\{0\}} \frac{\ds\int_{G} |\nabla v|^{2}dx}{\ds\int_{G} |v|^{2}dx},
\end{equation}
and by $\lambda_{T}(G)$ the minimum of the constrained problem
\begin{equation}
\label{twistedef}
 \lambda_{T}(G)= \min_{\substack{v\in W_{0}^{1,2}(G)\setminus\{0\} \\ \int_{G}|v|v\dx=0}} \frac{\ds\int_{G} |\nabla v|^{2}dx}{\ds\int_{G} |v|^{2}dx}.
\end{equation}

As regards \eqref{dirlap}, we recall some basic properties:

\begin{enumerate}
\item The Faber-Krahn inequality: for any bounded open set in $\Omega\subset\R^{n}$, it holds that
\[
\lambda(\Omega)\ge \lambda(\Omega\diesis)=\frac{\omega_{n}^{2/n}}{|\Omega|^{2/n}}
j_{n/2-1,1}^{2},
\]
where $j_{n/2-1,1}$ denotes, as usual, the first zero of the Bessel function of first kind of order $n/2-1$. If equality sign holds, then $\Omega$ is a ball.
\item If $\Omega=B_{1}\cup B_{2}$ is the union of two disjoint balls $B_{1},B_{2}$ with different radii $R_{1}>R_{2}>0$, then 
\[
\lambda_{\Delta}(\Omega)=\lambda_{\Delta}(B_{1})=\frac{j_{n/2-1,1}^{2}}{R_{1}^{2}}.
\] 
Hence it is simple, any associated eigenfuction does not change sign in the largest ball $B_{1}$, and it is identically zero in $B_{2}$.
\item If $\Omega=B_{1}\cup B_{2}$ is the union of two disjoint balls $B_{1},B_{2}$ with equal radii $0<R_{1}=R_{2}$, the first eigenvalue is not simple, and there exists an eigenfunction $u$ positive in $B_{1}$, negative in $B_{2}$ and such that $\int_{B_{1}\cup B_{2}}|u|u\dx =0$. In particular, this eigenfunction coincides with the positive first eigenfunction of $\lambda_{\Delta}(B_{1})$, and to its opposite (up to a translation) in $B_{2}$.
\end{enumerate}

\subsection{Some properties of $\mu(\Omega,\alpha)$}
In what follows, for a given function $u\colon\Omega\rightarrow \R$, $u_{+}=\max\{u,0\}$ and $u_{-}=\max\{-u,0\}$ will be its positive and negative part, and 
\[
	\Omega_{+}=\{u_{+}>0\},\qquad 	\Omega_{-}=\{u_{-}>0\}.
\]
\begin{prop}
The following properties for $\mu(\Omega,\alpha)$ hold.
\begin{enumerate}[label={(\alph*)},ref=\emph{\alph*}]
\item
The minimum $\mu(\Omega,\alpha)$ is 1-Lipschitz continuous and it is non-decreasing with respect to $\alpha\in\R$. 
\item For $\alpha < 0$, 
\[
\mu(\Omega,\alpha)=\lambda_\Delta (\Omega)+\alpha.
\]
\item For $\alpha\ge 0$, 
\begin{equation}
\label{ub}
\lambda_\Delta (\Omega)\le\mu(\Omega,\alpha)\le \min\{\lambda_{T}(\Omega),\,\lambda_\Delta (\Omega)+\alpha\}.
\end{equation}
\item As $\alpha\rightarrow +\infty$, we have that
\begin{equation*} 
\lim_{\alpha\rightarrow +\infty}\mu(\Omega,\alpha)=\lambda_{T}(\Omega).
\end{equation*}
\end{enumerate}
\end{prop}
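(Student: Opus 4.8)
The plan is to establish each of the four properties in turn, exploiting the structure of $\mc Q(v,\alpha)$ as the Dirichlet Rayleigh quotient plus the penalization term $\alpha|\int_\Omega |v|v\dx|/\int_\Omega |v|^2\dx$.

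\medskip
\noindent\textbf{Property (a).} Fix $v$ with $\int_\Omega|v|^2\dx=1$; then $\alpha\mapsto \mc Q(v,\alpha)$ is affine with slope $|\int_\Omega|v|v\dx|\in[0,1]$ (by Cauchy--Schwarz, since $|\int_\Omega |v|v\dx|\le \int_\Omega |v|^2\dx=1$), hence $1$-Lipschitz and non-decreasing. Taking the infimum over such $v$ preserves both properties: an infimum of $1$-Lipschitz non-decreasing functions is $1$-Lipschitz (if finite) and non-decreasing. I would note that $\mu(\Omega,\alpha)$ is finite for every $\alpha$ because $\mc Q(v,\alpha)\ge \lambda_\Delta(\Omega)-|\alpha|>-\infty$ from the same Cauchy--Schwarz bound, and a minimizer exists by the direct method (this is presumably established just before or just after the proposition, so I may simply invoke it).

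\medskip
\noindent\textbf{Property (b).} For $\alpha<0$, write $\mc Q(v,\alpha)=\dfrac{\int_\Omega|\nabla v|^2\dx}{\int_\Omega|v|^2\dx}+\alpha\dfrac{|\int_\Omega|v|v\dx|}{\int_\Omega|v|^2\dx}\ge \dfrac{\int_\Omega|\nabla v|^2\dx}{\int_\Omega|v|^2\dx}+\alpha$, using $|\int_\Omega|v|v\dx|\le\int_\Omega|v|^2\dx$ and $\alpha<0$. Taking the infimum gives $\mu(\Omega,\alpha)\ge\lambda_\Delta(\Omega)+\alpha$. For the reverse inequality, plug in a first Dirichlet eigenfunction $u$, which may be taken positive, so $\int_\Omega|u|u\dx=\int_\Omega u^2\dx$ and $\mc Q(u,\alpha)=\lambda_\Delta(\Omega)+\alpha$. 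Hence equality.

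\medskip
\noindent\textbf{Properties (c) and (d).} For $\alpha\ge0$ the penalization term is nonnegative, so $\mc Q(v,\alpha)\ge \int_\Omega|\nabla v|^2\dx/\int_\Omega|v|^2\dx$, whence $\mu(\Omega,\alpha)\ge\lambda_\Delta(\Omega)$; the upper bound $\mu(\Omega,\alpha)\le\lambda_\Delta(\Omega)+\alpha$ again comes from testing with a positive first Dirichlet eigenfunction, and $\mu(\Omega,\alpha)\le\lambda_T(\Omega)$ comes from testing with any admissible function for \eqref{twistedef} (for which the penalization term vanishes), giving (c). For (d), monotonicity in $\alpha$ together with the bound $\mu(\Omega,\alpha)\le\lambda_T(\Omega)$ shows $\lim_{\alpha\to+\infty}\mu(\Omega,\alpha)$ exists and is $\le\lambda_T(\Omega)$. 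For the matching lower bound, I would argue by contradiction: let $u_\alpha$ be minimizers normalized in $L^2$; from $\mc Q(u_\alpha,\alpha)\le\lambda_T(\Omega)$ and $\alpha\to+\infty$ we get $\int_\Omega|\nabla u_\alpha|^2\dx$ bounded and $|\int_\Omega|u_\alpha|u_\alpha\dx|\le \lambda_T(\Omega)/\alpha\to0$. Up to a subsequence $u_\alpha\rightharpoonup u$ in $W_0^{1,2}(\Omega)$ and strongly in $L^2$ (Rellich), so $\int_\Omega u^2\dx=1$, $u\not\equiv0$, and $\int_\Omega|u|u\dx=0$ by the strong $L^2$ convergence (the map $v\mapsto\int_\Omega|v|v\dx$ is continuous on $L^2$). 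Thus $u$ is admissible for \eqref{twistedef}, and by weak lower semicontinuity of the Dirichlet energy, $\lambda_T(\Omega)\le\int_\Omega|\nabla u|^2\dx\le\liminf_\alpha\int_\Omega|\nabla u_\alpha|^2\dx\le\liminf_\alpha\mc Q(u_\alpha,\alpha)=\lim_\alpha\mu(\Omega,\alpha)$. Combined with the upper bound this gives equality.

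\medskip
The only genuinely delicate point is the lower bound in (d): one must be careful that the weak limit $u$ is nonzero (handled by strong $L^2$ convergence and the $L^2$-normalization) and that the constraint $\int_\Omega|u|u\dx=0$ passes to the limit, which is fine since $t\mapsto|t|t$ is continuous with at most quadratic growth, so $v\mapsto\int_\Omega|v|v\dx$ is continuous on $L^2(\Omega)$. Everything else is a routine application of Cauchy--Schwarz, testing with explicit competitors, and lower semicontinuity.
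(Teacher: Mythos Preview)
Your proof is correct and follows essentially the same approach as the paper: the same elementary bound $|\int_\Omega |v|v\,dx|\le \int_\Omega |v|^2\,dx$ drives (a)--(c), and for (d) the paper carries out exactly your compactness argument (minimizers $u_k$ with $\|u_k\|_2=1$, bounded in $W_0^{1,2}$, weak limit $U$ with $\int_\Omega|U|U\,dx=0$, then lower semicontinuity). The only cosmetic differences are that the paper phrases (a) via the two-sided estimate $\mc Q(v,\alpha)\le \mc Q(v,\alpha+\varepsilon)\le \mc Q(v,\alpha)+\varepsilon$ rather than invoking the infimum of Lipschitz functions, and in (b) it routes the lower bound through $\mc Q(v,\alpha)\ge \mc Q(|v|,\alpha)$ rather than writing the inequality directly.
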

\begin{proof}
\begin{enumerate}[label={(\alph*)},ref=\emph{\alph*}]
\item For any $\eps>0$,
\[
\mc Q(v,\alpha)\le \mc Q(v,\alpha+\eps)\le \mc Q(v,\alpha)+\eps.
\]
Taking the minimum over $W_{0}^{1,2}(\Omega)\setminus \{0\}$, we have
\[
0\le \mu(\Omega,\alpha+\eps)-\mu(\Omega,\alpha)\le \eps,
\]
and the proof of (a) is concluded.
\item
Being $\alpha<0$, we have that $\mc Q(v,\alpha)\ge \mc Q(|v|,\alpha) =\mc Q(v,0)+\alpha \ge \lambda_\Delta (\Omega)+\alpha$, for any $v \in W_{0}^{1,2}(\Omega)$. On the other hand, if  $u\in W_{0}^{1,2}(\Omega)$ is a nonnegative minimizer for \eqref{dirlap}, $\mc Q(u,\alpha)=\lambda_{\Delta}(\Omega)+\alpha$, and then necessarily 
 $\lambda_{\Delta}(\Omega)+\alpha=\mu(\Omega,\alpha)$.
\item It follows immediately from the definitions of $\mu,\lambda_{\Delta}$ and $\lambda_{T}$.
\item Let $0\le \alpha_{k}$, $k\in \mathbb N$, be a positively divergent sequence. For any $k$, consider a minimizer $u_{k}\in W_{0}^{1,2}(\Omega)$ of \eqref{pb}
such that $\|u_{k}\|_{2}=1$. We have that
\[
 \mu(\Omega,\alpha_{k}) = \int_{\Omega}|\nabla u_{k}|^{2}dx +\alpha_{k}\left|\int_{\Omega}|u_{k}|u_{k}\dx\right| \le \lambda_{T}(\Omega).
\]
Then $u_{k}$ converges (up to a subsequence) to a function $U\in W_{0}^{1,2}(\Omega)$ strongly in $L^{2}(\Omega)$ and weakly in $W_{0}^{1,2}(\Omega)$. Moreover, $\|U\|_{L^{2}(\Omega)}=1$ and
\[
\left| \int_{\Omega}|u_{k}|u_{k}\right| \le \frac{\lambda_{T}(\Omega)}{\alpha_{k}}\rightarrow 0\quad\text{ as }k\rightarrow +\infty,
\]
which gives that $\int_{\Omega}|U|U\dx=0$. On the other hand, the weak convergence in $W_{0}^{1,2}(\Omega)$ implies that
\[
\int_{\Omega}|\nabla U|^{2}\dx \le \liminf \int_{\Omega}|\nabla u_{k}|^{2}\dx.
\]
Finally, by definition of $\lambda_{T}(\Omega)$, and \eqref{ub} we have
\begin{equation*}
\begin{split}
\lambda_{T}(\Omega) \le\ds \int_{\Omega}& |\nabla U|^{2}\dx \le \\ &\le 
\ds\liminf_{k\rightarrow +\infty}
 \left(\int_{\Omega}|\nabla u_{k}|^{2}\dx +\alpha_{k}\left|\int_{\Omega}|u_{k}|u_{k}\dx\right|\right) = \lim_{k\rightarrow+\infty}\mu(\Omega,\alpha_{k})\le \lambda_{T}(\Omega),
 \end{split}
\end{equation*}
and the proof is completed. 
\end{enumerate}
\end{proof}
\begin{rem}
Let us observe that from the above proposition, (b) gives that $\mu(\Omega,\,\cdot\,)$ is unbounded from below. Moreover, $\mu(\Omega,\alpha)=0$ corresponds to $-\alpha=\lambda(\Omega)$.  
\end{rem}
\begin{rem}
Among the properties of $\mu(\Omega,\alpha)$, we observe also that it does not have the same behavior of the usual Dirichlet Laplacian with respect to the rescaling of the domain, being also the term $\alpha$ affected of the rescaling. Indeed, while $\lambda_{\Delta}(t\Omega)=t^{-2}\lambda(\Omega)$, it holds that
$\mu(t\Omega;\alpha)= t^{-2}\mu(\Omega;t^{2}\alpha)$.
\end{rem}
In the proposition below, we describe some features of $\mu(\Omega,\alpha)$ by computing the associated Euler equation.
Without loss of generality we may assume that a minimizer $u$ satisfies $\int_{\Omega}|u|u\dx\ge 0$. 
\begin{lemma}
 Let $\alpha\ge 0$, and  $u\in W_{0}^{1,2}(\Omega)$ be a minimizer for \eqref{pb}. Then $u_{+}\in W_{0}^{1,2}(\Omega_{+})$ and $u_{-}\in W_{0}^{1,2}(\Omega_{-})$ are first eigenfunctions of the Dirichlet Laplacian relative to $\Omega_{+}$ and $\Omega_{-}$ respectively. Moreover:
\begin{enumerate}
 	\item suppose that $\ds \int_{\Omega}|u|u\dx>0$.
 	\begin{enumerate}
		\item If $u_{-}\equiv 0$ in $\Omega$, then
			\begin{equation}
			\label{solnoneg}
			\mu(\Omega,\alpha)=\lambda_\Delta (\Omega_{+})+\alpha.
			\end{equation}
		\item If $u_{-}\not \equiv 0$ in $\Omega$, then
			\begin{equation} 
			\label{media}
				\mu(\Omega,\alpha)=\lambda_\Delta (\Omega_{+})+\alpha=\frac{\lambda_\Delta (\Omega_{+})+\lambda_\Delta (\Omega_{-})}{2},
			\end{equation}
			and then the parameter $\alpha$ corresponds to
		\begin{equation}
			\label{media2}
			\alpha= \frac{\lambda_\Delta (\Omega_{-})-\lambda_\Delta (\Omega_{+})}{2}.
		\end{equation}
	\end{enumerate}
	In both cases (a) and (b),
	\begin{equation}
	\label{equality}
		\lambda_\Delta (\Omega_{+})=\lambda_\Delta (\Omega),
	\end{equation}
	and
	\begin{equation}
		\label{equality2}
		\mu(\Omega,\alpha)=\lambda_{\Delta}(\Omega)+\alpha=\lambda_{\Delta}(\Omega_{+})+\alpha.
	\end{equation}
	\item  Suppose that $\ds\int_{\Omega}|u|u\dx= 0$. Then
	\begin{equation} 
		\label{twisted}
		\mu(\Omega,\alpha)=\lambda_{T}(\Omega)=\frac{\lambda_\Delta (\Omega_{+})+
		\lambda_\Delta (\Omega_{-})}{2}.
	\end{equation}
	More precisely, if there exists $\bar \alpha$ such that a minimizer $\bar u$ of $\mu(\Omega,\bar\alpha)$ satisfies $\int_{\Omega}|\bar u|\bar u\dx=0$, then for any $\alpha>\bar \alpha$, $\bar u$ is a minimizer for $\mu(\Omega,\alpha)$, the equality in \eqref{twisted} holds, and $\bar u$ is a minimizer also for $\lambda_{T}(\Omega)$.
\end{enumerate}   
\end{lemma}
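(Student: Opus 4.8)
The plan is to exploit the Euler--Lagrange equation of \eqref{pb} whenever the modulus inside $\mc Q$ can be differentiated, and to use a direct ``grafting'' comparison to cover the degenerate situation. Normalize $\|u\|_{L^{2}(\Omega)}=1$ and, as agreed just before the statement, assume $\int_{\Omega}|u|u\dx\ge 0$; set $a=\int_{\Omega}u_{+}^{2}\dx$ and $b=\int_{\Omega}u_{-}^{2}\dx$, so that $a+b=1$, $\int_{\Omega}|u|u\dx=a-b\ge 0$, hence $a\ge b$. By standard elliptic regularity a minimizer may be taken continuous, so $\Omega_{\pm}$ are open and $u_{\pm}\in W_{0}^{1,2}(\Omega_{\pm})$.

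First I would prove that $u_{\pm}$ are first Dirichlet eigenfunctions of $\Omega_{\pm}$. If $b>0$, let $\psi_{+}\ge 0$ be a first eigenfunction of $\lambda_{\Delta}(\Omega_{+})$ normalized by $\int\psi_{+}^{2}\dx=a$; then $\int|\nabla\psi_{+}|^{2}\dx=a\,\lambda_{\Delta}(\Omega_{+})\le\int|\nabla u_{+}|^{2}\dx$. Since $\psi_{+}$ and $u_{-}$ have disjoint supports, $v:=\psi_{+}-u_{-}$ satisfies $\int v^{2}\dx=1$ and $\int|v|v\dx=a-b=\int_{\Omega}|u|u\dx$, so
\[
\mc Q(v,\alpha)=\int|\nabla\psi_{+}|^{2}\dx+\int|\nabla u_{-}|^{2}\dx+\alpha|a-b|\le \mc Q(u,\alpha)=\mu(\Omega,\alpha),
\]
and minimality forces $\int|\nabla u_{+}|^{2}\dx=a\,\lambda_{\Delta}(\Omega_{+})$, i.e. $u_{+}/\sqrt a$ attains the infimum \eqref{dirlap} on $\Omega_{+}$. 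Exchanging the roles of $u_{+}$ and $u_{-}$ deals with $u_{-}$; the case $b=0$ (i.e. $u=u_{+}\ge0$) is trivial, since then $\mc Q(u,\alpha)$ is the Rayleigh quotient of $\Omega_{+}$ plus $\alpha$.

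Then I would split according to the sign of $\int_{\Omega}|u|u\dx$. If $\int_{\Omega}|u|u\dx>0$ (case 1), continuity of $v\mapsto\int|v|v\dx$ makes $\mc Q(\cdot,\alpha)$ of class $C^{1}$ near $u$, so $u$ solves $-\Delta u+\alpha|u|=\mu(\Omega,\alpha)\,u$ weakly in $\Omega$; restricting to $\Omega_{+}$ gives $-\Delta u_{+}=(\mu(\Omega,\alpha)-\alpha)u_{+}$ with $u_{+}>0$, whence $\mu(\Omega,\alpha)=\lambda_{\Delta}(\Omega_{+})+\alpha$ --- this is \eqref{solnoneg} when $u_{-}\equiv0$. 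If instead $u_{-}\not\equiv0$, restricting also to $\Omega_{-}$ gives $\lambda_{\Delta}(\Omega_{-})=\mu(\Omega,\alpha)+\alpha$, and subtracting resp. adding the two relations yields \eqref{media2} and \eqref{media}. Identity \eqref{equality} then follows from the chain $\lambda_{\Delta}(\Omega)+\alpha\ge\mu(\Omega,\alpha)=\lambda_{\Delta}(\Omega_{+})+\alpha\ge\lambda_{\Delta}(\Omega)+\alpha$, the first inequality being \eqref{ub} and the last domain monotonicity, and \eqref{equality2} is immediate. If $\int_{\Omega}|u|u\dx=0$ (case 2), the $\alpha$-term of $\mc Q$ vanishes at $u$, so $\mu(\Omega,\alpha)=\int_{\Omega}|\nabla u|^{2}\dx\big/\int_{\Omega}u^{2}\dx\ge\lambda_{T}(\Omega)$ by \eqref{twistedef}, while $\mu(\Omega,\alpha)\le\lambda_{T}(\Omega)$ by \eqref{ub}; hence $\mu(\Omega,\alpha)=\lambda_{T}(\Omega)$, $u$ minimizes $\lambda_{T}(\Omega)$, and combining with $\mu(\Omega,\alpha)=\int|\nabla u_{+}|^{2}\dx+\int|\nabla u_{-}|^{2}\dx=\tfrac12(\lambda_{\Delta}(\Omega_{+})+\lambda_{\Delta}(\Omega_{-}))$ from the first step gives \eqref{twisted}. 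For the last assertion, if $\bar u$ minimizes $\mu(\Omega,\bar\alpha)$ with $\int_{\Omega}|\bar u|\bar u\dx=0$, then $\mc Q(\bar u,\alpha)=\mc Q(\bar u,\bar\alpha)=\mu(\Omega,\bar\alpha)=\lambda_{T}(\Omega)$ for every $\alpha$; for $\alpha>\bar\alpha$ this gives $\lambda_{T}(\Omega)\ge\mu(\Omega,\alpha)\ge\mu(\Omega,\bar\alpha)=\lambda_{T}(\Omega)$, the last inequality by monotonicity in $\alpha$, so $\bar u$ is again a minimizer and case 2 applies.

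I expect the main obstacle to be case 2: there is no genuine Euler--Lagrange equation, so the eigenfunction property of $u_{\pm}$ must be extracted from the grafting comparison, and one must check carefully that replacing $u_{+}$ by $\psi_{+}$ preserves admissibility --- equal $L^{2}$-mass on each nodal region keeps $\int|v|v\dx$ unchanged, and disjointness of the nodal domains kills all cross terms in $\int|\nabla v|^{2}\dx$. A secondary technical point is to secure enough regularity to treat $\Omega_{\pm}$ as open sets with $u_{\pm}\in W_{0}^{1,2}(\Omega_{\pm})$ and to localize the equation in case 1.
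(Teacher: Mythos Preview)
Your argument is correct and follows the paper's overall architecture (split on whether $\int_\Omega|u|u\,dx$ vanishes, use the Euler equation in Case~1, sandwich $\mu$ between the two bounds in \eqref{ub} to force $\lambda_\Delta(\Omega_+)=\lambda_\Delta(\Omega)$, and identify $\mu=\lambda_T$ in Case~2). The one substantive difference is how you obtain that $u_\pm$ are first Dirichlet eigenfunctions on $\Omega_\pm$. The paper derives this from the Euler--Lagrange equation in both cases: in Case~1 the unconstrained equation $-\Delta u=\mu u-\alpha|u|$, and in Case~2 the constrained Euler system with Lagrange multiplier (for which it cites \cite{n12}). You instead prove the eigenfunction property \emph{a priori}, by the ``grafting'' comparison: replace $u_+$ by a first eigenfunction $\psi_+$ on $\Omega_+$ with the same $L^2$-mass, note that this leaves $\int|v|v\,dx=a-b$ unchanged while not increasing the Dirichlet energy, and invoke minimality. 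This is a genuinely more elementary route for Case~2, since it sidesteps the Lagrange-multiplier computation entirely; the price is that in Case~1 you still need the Euler equation to pin down the specific values $\lambda_\Delta(\Omega_\pm)=\mu\mp\alpha$, so the grafting step is partly redundant there. Both approaches rely on the same regularity input (continuity of $u$ so that $\Omega_\pm$ are open and $u_\pm\in W_0^{1,2}(\Omega_\pm)$), which you flag correctly and which the paper takes for granted.
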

\begin{proof}
For sake of simplicity, here we write $\mu=\mu(\Omega,\alpha)$, and distinguish two cases. \\
\noindent {\bf Case 1:} {$\ds\int_{\Omega}|u|u\dx> 0$}. We have that $u$ solves
\begin{equation*}
	\left\{
		\begin{array}{ll}
			-\Delta u =  \mu u - \alpha |u| &\text{in } \Omega, \\
			u=0 & \text{on }\de \Omega.
		\end{array}
	\right.
\end{equation*}
If $u\ge 0$ in $\Omega$, then $u_{+}$ satisfies
\begin{equation*}
	\left\{
		\begin{array}{ll}
			-\Delta u_{+} =  (\mu - \alpha) u_{+} &\text{in } \Omega_{+}, \\
			u_{+}=0 & \text{on }\de \Omega_{+}.
		\end{array}
	\right.
\end{equation*}
The positivity of the eigenfunction $u_{+}$ in $\Omega_{+}$ guarantees that $\mu-\alpha$ coincides with the first eigenvalue $\lambda_{+}(\Omega)$ on $\Omega_{+}$, and then \eqref{solnoneg} holds. Moreover, \eqref{equality} follows from the inequalities
\[
 \lambda_\Delta (\Omega_{+})+\alpha\le \lambda_\Delta (\Omega)+\alpha\le \lambda_\Delta (\Omega_{+})+\alpha,
\] 
obtained by substituting \eqref{solnoneg} in \eqref{ub}, and recalling the monotonicity of the Dirichlet-Laplace eigenvalues with respect to the inclusion of sets.

If $u$ changes sign in $\Omega$, then $u_{+}$ and $u_{-}$ satisfy
\begin{equation*}
	\left\{
		\begin{array}{ll}
			-\Delta u_{+} =  (\mu - \alpha) u_{+} &\text{in } \Omega_{+}, \\
			u=0 & \text{on }\de \Omega_{+},
		\end{array}
	\right.
\text{
and
}
	\left\{
		\begin{array}{ll}
			-\Delta u_{-} =  (\mu + \alpha) u_{-} &\text{in } \Omega_{-}, \\
			u_{-}=0 & \text{on }\de \Omega_{-}.
		\end{array}
	\right.
\end{equation*}
Hence
\[
\lambda_\Delta (\Omega_{+})=\mu-\alpha,\quad \lambda_\Delta (\Omega_{-})=\mu+\alpha,
\]
that give \eqref{media} and \eqref{media2}. Similarly as before, substituting \eqref{media} and \eqref{media2} in \eqref{ub} and using the monotonicity of $\lambda_\Delta (\,\cdot\,)$, the equality \eqref{equality} holds. By \eqref{solnoneg}, \eqref{media} and \eqref{equality} we get also \eqref{equality2}. 

\noindent {\bf Case 2:} {$\ds\int_{\Omega}|u|u\dx=0$}. First, we observe that in this case
\begin{equation*}
\mu(\Omega,\alpha)= \lambda_{T}(\Omega).
\end{equation*}
Indeed, by definition of $\mu$ and $\lambda_{T}$, and being $u$ an admissible function for \eqref{twistedef}, we have
\[
\lambda_{T}(\Omega)\ge \mu(\Omega,\alpha)=\mc Q(u,\alpha)\ge \lambda_{T}(\Omega).
\]
Computing the Euler equation with the constraint $\int_{\Omega}|u|u\dx=0$, the functions $u_{+}\in W_{0}^{1,2}(\Omega_{+})$ and $u_{-}\in W_{0}^{1,2}(\Omega_{-})$ satisfy
\begin{equation}
\label{eul+}
\left\{
\begin{array}{ll}
-\Delta u_{+} = \lambda_{+}
\,u_{+} &\text{in }\Omega_{+},\\[.1cm]
u_{+}=0&\text{on }\de \Omega_{+},
\end{array}
\right.
\text{ and }
\left\{
\begin{array}{ll}
-\Delta u_{-} = \lambda_{-}
\,
u_{-} &\text{in }\Omega_{-},\\[.1cm]
u_{-}=0&\text{on }\de \Omega_{-},
\end{array}
\right.
\end{equation}
for some positive values $\lambda_{+}$ and $\lambda_{-}$ (see also \cite{n12}). Moreover, being $u_{+}$ and $u_{-}$ positive functions in $\Omega_{+}$ and $\Omega_{-}$ respectively, it follows that
\[
\lambda_{+}=\lambda_\Delta (\Omega_{+}),\quad\lambda_{-}=\lambda_\Delta (\Omega_{-}).
\]
Hence, in this case we have that $\int_{\Omega_{+}}u_{+}^{2}\dx=\int_{\Omega_{-}}u_{-}^{2}\dx$, and from the minimality of $u$ and using \eqref{eul+} it follows that
\[
\mu(\Omega,\alpha) = \mc Q(u,\alpha)=
\frac{\ds\int_{\Omega_{+}}|\nabla u_{+}|^{2}\dx+\int_{\Omega_{-}}|\nabla u_{-}|^{2}\dx}{\ds\int_{\Omega_{+}}u_{+}^{2}\dx+\int_{\Omega_{-}}u_{-}^{2}\dx}=
 \frac{\lambda_\Delta (\Omega_{+})+\lambda_\Delta (\Omega_{-})}{2},
\]
and \eqref{twisted} is proved. The proof of (2) is completed by recalling that the function $\mu(\Omega,\,\cdot\,)$ is nondecreasing and bounded from above by $\lambda_{T}(\Omega)$.
\end{proof}
Using the above lemma, the minimum $\mu(\Omega,\alpha)$ can be characterized as follows.
\begin{prop}
\label{charomega}
If $\Omega$ is a bounded open set of $\R^{n}$, then
\[
\mu(\Omega,\alpha) = \min\{\lambda_\Delta (\Omega)+\alpha, \lambda_{T}(\Omega)\} =
\begin{cases}
\lambda_\Delta (\Omega)+\alpha,&\text{if }\alpha \le \lambda_{T}(\Omega)-\lambda_\Delta (\Omega),
\\
\lambda_{T}(\Omega), &\text{if }\alpha \ge \lambda_{T}(\Omega)-\lambda_\Delta (\Omega).
\end{cases}
\]
\end{prop}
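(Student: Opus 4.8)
The plan is to establish the single identity $\mu(\Omega,\alpha)=\min\{\lambda_\Delta(\Omega)+\alpha,\lambda_T(\Omega)\}$; the piecewise description then follows at once, since the affine function $\alpha\mapsto\lambda_\Delta(\Omega)+\alpha$ meets the constant $\lambda_T(\Omega)$ exactly at $\alpha_0:=\lambda_T(\Omega)-\lambda_\Delta(\Omega)$, and $\alpha_0\ge 0$ because every competitor for \eqref{twistedef} is a competitor for \eqref{dirlap}, so that $\lambda_\Delta(\Omega)\le\lambda_T(\Omega)$. The case $\alpha<0$ is immediate from part (b) of the previous Proposition: there $\mu(\Omega,\alpha)=\lambda_\Delta(\Omega)+\alpha<\lambda_\Delta(\Omega)\le\lambda_T(\Omega)$, so this value already equals the claimed minimum.

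For $\alpha\ge 0$, the inequality $\mu(\Omega,\alpha)\le\min\{\lambda_T(\Omega),\lambda_\Delta(\Omega)+\alpha\}$ is exactly \eqref{ub}. For the reverse inequality I would take a minimizer $u$ of \eqref{pb}, normalized so that $\int_\Omega|u|u\dx\ge 0$, and apply the preceding Lemma. If $\int_\Omega|u|u\dx>0$, then \eqref{equality2} gives $\mu(\Omega,\alpha)=\lambda_\Delta(\Omega)+\alpha$; if $\int_\Omega|u|u\dx=0$, then \eqref{twisted} gives $\mu(\Omega,\alpha)=\lambda_T(\Omega)$. In either case $\mu(\Omega,\alpha)$ coincides with one of the two numbers $\lambda_\Delta(\Omega)+\alpha$, $\lambda_T(\Omega)$, hence is bounded below by their minimum; combined with the upper bound this proves the identity. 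Moreover, feeding the two outcomes back into \eqref{ub} shows that the first alternative of the Lemma can occur only when $\alpha\le\alpha_0$ and the second only when $\alpha\ge\alpha_0$, in agreement with the statement.

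I do not expect a genuine obstacle: the Proposition is a direct corollary of the Lemma. The only point that deserves the usual care is the existence of the minimizer $u$ of \eqref{pb} on which the Lemma is based — produced by the direct method, using the compact embedding $W_0^{1,2}(\Omega)\hookrightarrow L^2(\Omega)$ valid for bounded $\Omega$, the weak lower semicontinuity of $v\mapsto\int_\Omega|\nabla v|^2\dx$, and the continuity of $v\mapsto\bigl|\int_\Omega|v|v\dx\bigr|$ along a minimizing sequence converging strongly in $L^2(\Omega)$ — and this is in any case implicitly used already in the Lemma itself. It is also worth noting in passing that $\lambda_T(\Omega)<+\infty$, so that the formula is meaningful: a nonempty open set contains two disjoint balls, and a suitable signed linear combination of their first Dirichlet eigenfunctions is a nonzero admissible function for \eqref{twistedef}.
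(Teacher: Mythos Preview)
Your proposal is correct and follows essentially the same route as the paper: both arguments rest on the dichotomy from the preceding Lemma (either $\int_\Omega|u|u\dx>0$ and \eqref{equality2} applies, or $\int_\Omega|u|u\dx=0$ and \eqref{twisted} applies), combined with the upper bound \eqref{ub}. The paper phrases the final step as a case split on whether $\lambda_\Delta(\Omega)+\alpha$ or $\lambda_T(\Omega)$ is smaller and then rules out the wrong alternative by contradiction, whereas you observe directly that $\mu(\Omega,\alpha)$ equals one of the two numbers and hence is at least their minimum; these are the same argument in slightly different packaging, and your version has the mild advantage of covering the borderline $\lambda_\Delta(\Omega)+\alpha=\lambda_T(\Omega)$ without a separate word.
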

\begin{proof}
Let $\alpha\ge 0$ be fixed. We have to show that $\mu(\Omega,\alpha)=\min\{\lambda_{\Delta}(\Omega)+\alpha,\lambda_{T}(\Omega)\}$.

Clearly if $\lambda_{\Delta}(\Omega)+\alpha< \lambda_{T}(\Omega)$, a minimizer $u$ of  $\mu(\Omega,\alpha)$ cannot verify $\int_{\Omega}|u|u\dx=0$. Otherwise, by \eqref{twisted}, and choosing a nonnegative first eigenfunction $u_{1}$ of $-\Delta$ in $\Omega$, we have
\[
Q(u,\alpha)=\mu(\Omega,\alpha)=\lambda_{T}(\Omega)>\lambda_{\Delta}(\Omega)+\alpha=\mc Q(u_{1},\alpha),
\]
contradicting the minimality of $u$. Hence $\int_{\Omega}|u|u\dx>0$, and by \eqref{equality2}, $\mu(\Omega,\alpha)=\lambda_{\Delta}(\Omega)+\alpha$.

Analogously, if $\lambda_\Delta (\Omega)+\alpha > \lambda_{T}(\Omega)$, a minimizer $u$ necessarily satisfies $\int_{\Omega}|u|u\dx=0$, and $\mu(\Omega,\alpha)=\lambda_{T}(\Omega)$. 
\end{proof}
\begin{rem}
We explicitly observe that if $\Omega$ is connected, a minimizer $u$ of 
$\mu(\Omega,\alpha)$ either is positive in $\Omega$ or $\int_{\Omega}|u|u\dx =0$.
\end{rem}
Assuming now that $\Omega$ is the union of two disjoint balls (possibly one ball),
we have the following.
\begin{cor}
\label{charballs}
If $\Omega=B_{1}$, with radius $R_{1}>0$, then
\begin{equation}
\label{mupalla}
\mu(B_{1};\alpha)=
\begin{cases}
\frac{j^{2}_{n/2-1,1}}{R_{1}^{2}}+\alpha,&\text{if }\alpha\le \lambda_{T}(B_{1})-\frac{j^{2}_{n/2-1,1}}{R_{1}^{2}},\\[.2cm]
\lambda_{T}(B_{1})&\text{otherwise}.
\end{cases}
\end{equation}
If $\Omega=B_{1}\cup B_{2}$, where $B_{1}$ and $B_{2}$ are disjoint balls with radii $R_{1}$, $R_{2}$ such that $R_{1}\ge R_{2}>0$, then
\begin{equation}
\label{muduepalle}
\mu(B_{1}\cup B_{2},\alpha)=
\begin{cases}
\frac{j^{2}_{n/2-1,1}}{R_{1}^{2}}+\alpha,&\text{if }\alpha\le \lambda_{T}(B_1\cup B_{2})-\frac{j^{2}_{n/2-1,1}}{R_{1}^{2}},\\[.2cm]
\lambda_{T}(B_1\cup B_{2})&\text{otherwise}.
\end{cases}
\end{equation}

In particular, if $R_{1}=R_{2}$, for any $\alpha\ge 0$
\begin{equation}
\label{mupaug}
\mu(B_{1}\cup B_{2};\alpha)= \lambda_{T}(B_{1}\cup B_{2})=\frac{2^{\frac 2 n} \omega_{n}^{\frac 2 n}j^{2}_{\frac n 2 -1,1} }{|\Omega|^{\frac 2 n}},
\end{equation}
where the value in the right-hand side is $\lambda_\Delta (B_{1}\cup B_{2})$, and any minimizer of $\mu(B_{1}\cup B_{2},\alpha)$ is a minimizer of $\lambda_{T}(B_{1}\cup B_{2})$.

\end{cor}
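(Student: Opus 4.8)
The plan is to read off both formulas from Proposition~\ref{charomega}, which states $\mu(\Omega,\alpha)=\min\{\lambda_\Delta(\Omega)+\alpha,\lambda_T(\Omega)\}$, once $\lambda_\Delta(\Omega)$ is made explicit in the two geometries. For a single ball $B_1$ of radius $R_1$, the equality case of the Faber--Krahn inequality (a ball coincides with its Schwarz symmetrization up to a translation, and $\lambda_\Delta$ is translation invariant) gives $\lambda_\Delta(B_1)=\omega_n^{2/n}|B_1|^{-2/n}j_{n/2-1,1}^2=j_{n/2-1,1}^2/R_1^2$. For $\Omega=B_1\cup B_2$ with $R_1\ge R_2>0$ I would use the facts recalled above: if $R_1>R_2$ then $\lambda_\Delta(B_1\cup B_2)=\lambda_\Delta(B_1)=j_{n/2-1,1}^2/R_1^2$, while if $R_1=R_2$ the same value is still attained (the eigenvalue merely fails to be simple). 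Substituting these identities, and spelling out the threshold $\alpha\le\lambda_T(\Omega)-\lambda_\Delta(\Omega)$ of Proposition~\ref{charomega}, yields \eqref{mupalla} and \eqref{muduepalle}.

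Next I would treat the equal-radii case, where the key point is that $\lambda_T(B_1\cup B_2)=\lambda_\Delta(B_1\cup B_2)$. The inequality $\lambda_T\ge\lambda_\Delta$ holds for every bounded open set, since the admissible class in \eqref{twistedef} is a subset of that in \eqref{dirlap}. For the reverse inequality I would use the sign-changing first Dirichlet eigenfunction $u$ of $B_1\cup B_2$ recalled above: it is positive on $B_1$, negative on $B_2$ and satisfies $\int_{B_1\cup B_2}|u|u\dx=0$, hence it is admissible in \eqref{twistedef} and its Rayleigh quotient equals $\lambda_\Delta(B_1\cup B_2)$, so $\lambda_T(B_1\cup B_2)\le\lambda_\Delta(B_1\cup B_2)$. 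Therefore $\lambda_T(B_1\cup B_2)-\lambda_\Delta(B_1\cup B_2)=0$, so for $\alpha\ge 0$ only the second branch of \eqref{muduepalle} can occur and $\mu(B_1\cup B_2,\alpha)=\lambda_T(B_1\cup B_2)=\lambda_\Delta(B_1\cup B_2)$. The constant in \eqref{mupaug} then follows from $|\Omega|=|B_1|+|B_2|=2\omega_n R_1^n$, i.e.\ $R_1^{-2}=(2\omega_n/|\Omega|)^{2/n}$, whence $j_{n/2-1,1}^2/R_1^2=2^{2/n}\omega_n^{2/n}j_{n/2-1,1}^2|\Omega|^{-2/n}$.

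For the assertion on minimizers, take $\alpha>0$; I would argue as in the proof of Proposition~\ref{charomega}: since $\alpha>0=\lambda_T(B_1\cup B_2)-\lambda_\Delta(B_1\cup B_2)$, any minimizer $u$ of $\mu(B_1\cup B_2,\alpha)$ must satisfy $\int_{B_1\cup B_2}|u|u\dx=0$; then the $\alpha$-term in $\mc Q(u,\alpha)$ drops out, $u$ is admissible in \eqref{twistedef}, and $\mc Q(u,\alpha)$ reduces to $\int|\nabla u|^2\dx/\int u^2\dx=\mu(B_1\cup B_2,\alpha)=\lambda_T(B_1\cup B_2)$, so $u$ is a minimizer of $\lambda_T(B_1\cup B_2)$.

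I do not expect a real obstacle here: the corollary is essentially a translation of Proposition~\ref{charomega} into the language of balls, together with the explicit value of $\lambda_\Delta$ and an elementary measure computation. The only step needing a short argument beyond substitution is the identity $\lambda_T(B_1\cup B_2)=\lambda_\Delta(B_1\cup B_2)$ for two balls of equal radius, which rests on the existence of the sign-changing first eigenfunction with $\int|u|u\dx=0$ recalled above.
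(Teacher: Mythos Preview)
Your proposal is correct and follows essentially the same route as the paper: invoke Proposition~\ref{charomega}, make $\lambda_\Delta$ explicit on one or two balls, and for equal radii prove $\lambda_T(B_1\cup B_2)=\lambda_\Delta(B_1\cup B_2)$ by testing the twisted problem with the sign-changing first Dirichlet eigenfunction. The paper's own proof is terser and in fact does not spell out the minimizer assertion at all; your extra paragraph handling it via the strict inequality $\alpha>0=\lambda_T-\lambda_\Delta$ is a reasonable addition (and indeed $\alpha>0$ is the right regime for that claim, since at $\alpha=0$ a nonnegative first eigenfunction supported in one ball minimizes $\mu$ but is not admissible for $\lambda_T$).
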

\begin{proof}
The proof of \eqref{mupalla} and \eqref{muduepalle} follows from Proposition \ref{charomega} by writing explicitly $\lambda_{\Delta}$ in the case of one ball or two disjoint balls. Then, we have only to show last equality in \eqref{mupaug}. Observe first that
\[
\lambda_{T}(B_{1}\cup B_{2})\ge \lambda_\Delta (B_{1}\cup B_{2}).
\]
On the other hand, being $B_{1}$ and $B_{2}$ disjoint balls with equal measure, there exists an eigenfunction $V$ of the Dirichlet Laplacian relative to $B_{1}\cup B_{2}$ such that $\int_{B_{1}\cup B_{2}}|V|V \dx =0$. More precisely, this eigenfunction corresponds to a first positive Dirichlet Laplacian eigenfunction on $B_{1}$, and to its opposite (up to a translation) on $B_{2}$. Then $V$ is an admissible test function for the Rayleigh quotient of $\lambda_{T}(B_{1}\cup B_{2})$, and
\[
\lambda_{T}(B_{1}\cup B_{2}) \le \frac{\ds\int_{B_{1}\cup B_{2}}|\nabla\, V|^{2}\dx}{\ds\int_{B_{1}\cup B_{2}}V^{2}\dx}= \lambda_\Delta (B_{1}\cup B_{2}),
\]
and then $\lambda_\Delta (B_{1}\cup B_{2})=\lambda_{T}(B_{1}\cup B_{2})$.
\end{proof}
\section{Proof of Theorem \ref{mainthm}}
The proof of Theorem \ref{mainthm} will be pursued in two main steps. First, we show that the minimum of $\mu(\Omega,\alpha)$ among all sets of fixed measure is reached at the union of two disjoint balls. Second, we minimize among such sets.
 
\subsection{An isoperimetric inequality for $\mu(\Omega,\alpha)$}
The first step in order to prove Theorem \ref{mainthm} is to show an 
isoperimetric inequality for $\mu(\Omega,\alpha)$. To this aim, let
\[
\mathcal B(|\Omega|) =\{A=B_{1}\cup B_{2}\colon B_{1},B_{2}\text{ open disjoint balls of }\R^{n},\; |B_{1}\cup B_{2}|=|\Omega|\}.
\]
In the above definition we are implicitly assuming that $A\in \mc B(|\Omega|)$ can be a unique ball.
\begin{prop}
\label{propiso}
Let $\Omega \subset \R^{n}$ be bounded, open set such that $\Omega\not\in \mc B(|\Omega|)$. Then there exists $A_{\alpha}=B_{1}\cup B_{2}\in\mc B(|\Omega|)$ such that
\[
\mu(\Omega,\alpha) > \mu(A_{\alpha},\alpha)=\min_{A\in \mc B(|\Omega|)} \mu(A,\alpha).
\]
Moreover, 
\begin{equation}
\label{minmin}
\mu(A_{\alpha},\alpha)= \mc Q(v_{1}\chi_{B_{1}}-v_{2}\chi_{B_{2}},\alpha),
\end{equation}
for some nonnegative function $v_{1}$ and $v_{2}$, radially decreasing in $B_{1}$ and $B_{2}$ respectively.  More precisely, either $v_{2} \equiv 0$, and $v_{1}$ is positive in $B_{1}=\Omega$, or $v_{1}>0$ in $B_{1}$ and $v_{2}>0$ in $B_{2}$.
\end{prop}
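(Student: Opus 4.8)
The plan is to pass from an arbitrary $\Omega$ to a pair of disjoint balls by a two-fold Schwarz symmetrization of a minimizer, and then to detect, through the rigidity of the P\'olya--Szego inequality, exactly when this passage loses nothing. (For $\alpha<0$ one has $\mu(\cdot,\alpha)=\lambda_\Delta(\cdot)+\alpha$, so the claim is immediate from the Faber--Krahn inequality with $A_\alpha=\Omega\diesis$; assume therefore $\alpha\ge 0$.) I would fix a minimizer $u\in W_0^{1,2}(\Omega)$ of \eqref{pb}, normalized so that $\int_\Omega|u|u\dx\ge 0$. By the lemma above, $u_+\in W_0^{1,2}(\Omega_+)$ and $u_-\in W_0^{1,2}(\Omega_-)$ are first Dirichlet eigenfunctions of $\Omega_+$ and $\Omega_-$; in particular they solve the \emph{linear} equations $-\Delta u_\pm=\lambda_\Delta(\Omega_\pm)u_\pm$ and are thus real-analytic and non-constant on every connected component of $\Omega_\pm$. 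Let $B_1=(\Omega_+)\diesis$ and $B_2=(\Omega_-)\diesis$, translated so that $B_1\cap B_2=\emptyset$ (with $B_2=\emptyset$ if $u_-\equiv 0$), and put $v=u_+\diesis\,\chi_{B_1}-u_-\diesis\,\chi_{B_2}\in W_0^{1,2}(B_1\cup B_2)$. Applying \eqref{lp} and \eqref{ps} separately on $\Omega_+$ and on $\Omega_-$ gives $\int|\nabla v|^2\le\int_\Omega|\nabla u|^2$, $\int v^2=\int_\Omega u^2$, $\int|v|v=\int_\Omega|u|u$, hence $\mc Q(v,\alpha)\le\mc Q(u,\alpha)=\mu(\Omega,\alpha)$, while $|B_1|+|B_2|=|\Omega_+|+|\Omega_-|\le|\Omega|$. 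Extending $v$ by zero to any $\widetilde A\in\mc B(|\Omega|)$ containing $B_1\cup B_2$ yields $\mu(\widetilde A,\alpha)\le\mu(\Omega,\alpha)$; since $\mc B(|\Omega|)$ is parametrized by the compact set of radii $R_1\ge R_2\ge 0$ with $\omega_n(R_1^n+R_2^n)=|\Omega|$ and, by Corollary \ref{charballs}, $A\mapsto\mu(A,\alpha)$ is continuous there, the minimum $\mu(A_\alpha,\alpha)=\min_{\mc B(|\Omega|)}\mu(\cdot,\alpha)$ is attained and $\mu(A_\alpha,\alpha)\le\mu(\Omega,\alpha)$.

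To make this inequality strict when $\Omega\notin\mc B(|\Omega|)$, I would argue by contradiction: if $\mu(A_\alpha,\alpha)=\mu(\Omega,\alpha)$, then $\mc Q(v,\alpha)=\mc Q(u,\alpha)$, which forces equality in \eqref{ps} both for $u_+$ on $\Omega_+$ and for $u_-$ on $\Omega_-$. The analyticity of $u_\pm$ makes all their level sets Lebesgue-null, so the distribution functions of $u_\pm$ are strictly decreasing on $[0,\esssup u_\pm)$ and the nondegeneracy condition $|\{|\nabla u_\pm\diesis|=0\}\cap(u_\pm\diesis)^{-1}(0,\esssup u_\pm)|=0$ in the equality case of \eqref{ps} holds; therefore $\Omega_+$ and $\Omega_-$ are balls, up to translation, and $u=u\diesis$. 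Now either $|\Omega_+|+|\Omega_-|=|\Omega|$, in which case $\Omega$ agrees up to a null set with the union of disjoint balls $\Omega_+\cup\Omega_-$ — and, an open set squeezed between $\Omega_+\cup\Omega_-$ and its closure being $\Omega_+\cup\Omega_-$ itself, this contradicts $\Omega\notin\mc B(|\Omega|)$ — or $|\Omega_+|+|\Omega_-|<|\Omega|$, in which case I would enlarge $\Omega_+$ to the ball $\Omega_+'$ with $\Omega_+'\cup\Omega_-\in\mc B(|\Omega|)$ and use, as competitor on $\Omega_+'\cup\Omega_-$, the function equal to a suitable multiple of the first eigenfunction of $\Omega_+'$ there and to $-u_-$ on $\Omega_-$: since $\lambda_\Delta(\Omega_+')<\lambda_\Delta(\Omega_+)$ its Rayleigh quotient is strictly below $\mc Q(u,\alpha)$, so $\mu(\Omega_+'\cup\Omega_-,\alpha)<\mu(\Omega,\alpha)$, contradicting the minimality of $A_\alpha$. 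Either way $\mu(\Omega,\alpha)>\mu(A_\alpha,\alpha)$.

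For the representation \eqref{minmin} I would run the same symmetrization on a minimizer $w$ of $\mu(A_\alpha,\alpha)$: writing $\beta_1=((A_\alpha)_+)\diesis$, $\beta_2=((A_\alpha)_-)\diesis$ and $\bar w=w_+\diesis\chi_{\beta_1}-w_-\diesis\chi_{\beta_2}$, the chain $\min_{\mc B(|\Omega|)}\mu\le\mu(\beta_1\cup\beta_2,\alpha)\le\mc Q(\bar w,\alpha)\le\mc Q(w,\alpha)=\mu(A_\alpha,\alpha)=\min_{\mc B(|\Omega|)}\mu$ must be an equality, so \eqref{ps} is an equality for $w_\pm$ on $(A_\alpha)_\pm$, there is no measure deficit (otherwise the enlargement trick lowers the minimum), and the rigidity above shows that $w$ is, up to translation and a sign, the positive radial first Dirichlet eigenfunction on each ball it charges and zero on the others, these balls exhausting $A_\alpha$. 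By Proposition \ref{charomega} and the lemma above, if $\int_{A_\alpha}|w|w\dx>0$ then $\mu(A_\alpha,\alpha)=\lambda_\Delta(A_\alpha)+\alpha$, and comparing $A_\alpha$ with the single ball of volume $|\Omega|$ forces $A_\alpha$ to be that ball and $w$ sign-definite, i.e. $v_2\equiv 0$ and $v_1>0$ radial decreasing on $B_1=A_\alpha$; while if $\int_{A_\alpha}|w|w\dx=0$ then $\mu(A_\alpha,\alpha)=\lambda_T(A_\alpha)$, $w$ changes sign, and the rigidity forces $A_\alpha=B_1\cup B_2$ and $w=v_1\chi_{B_1}-v_2\chi_{B_2}$ with $v_1,v_2$ positive radially decreasing first eigenfunctions of $B_1$, $B_2$ normalized by $\int_{B_1}v_1^2\dx=\int_{B_2}v_2^2\dx$.

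The step I expect to be the main obstacle is this rigidity: verifying precisely the Brothers--Ziemer nondegeneracy condition for $u_\pm\diesis$ (the reason I insist that $u_\pm$ solve the \emph{linear} eigenvalue equation, hence are analytic with null level sets), and organizing cleanly the dichotomy ``$\Omega_+\cup\Omega_-$ already fills $\Omega$, so $\Omega$ is a union of two balls'' versus ``there is a measure deficit, which a ball enlargement exploits''. The compactness and continuity yielding $A_\alpha$, the $L^p$- and Dirichlet-energy identities for $v$, and the sign-definite-versus-twisted splitting of the minimizing function on $A_\alpha$ are routine.
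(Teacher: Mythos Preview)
Your argument is correct and follows the same route as the paper's: apply Schwarz symmetrization separately to $u_+$ and $u_-$ to land in $\mc B(|\Omega|)$, then use P\'olya--Szeg\H{o} rigidity together with a measure-deficit argument for strictness. You are more explicit than the paper on two points---verifying the Brothers--Ziemer nondegeneracy condition via analyticity of the eigenfunctions $u_\pm$, and spelling out the representation \eqref{minmin} case by case---while the paper handles the measure deficit $|\Omega_+|+|\Omega_-|<|\Omega|$ by invoking monotonicity of $\mu(\cdot,\alpha)$ under homotheties rather than your ball-enlargement competitor; these are minor variations on the same idea.
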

\begin{proof}
Let $u\in W_{0}^{1,2}(\Omega)$ be a minimizer of \eqref{pb}. Using \eqref{lp} and P\'olya-Szeg\"o principle \eqref{ps}, we have that
\begin{align}\notag
\mu(\Omega,\alpha)&= 
\frac{
\ds \int_{\Omega_{+}}|\nabla u_{+}|^{2}\dx +\ds \int_{\Omega_{-}}|\nabla u_{-}|^{2}\dx+
\alpha\left| \int_{\Omega_{+}}u_{+}^{2}\dx-\int_{\Omega_{-}}u_{-}^{2}\dx   \right|}
{\ds
\int_{\Omega_{+}}u_{+}^{2}\dx+\int_{\Omega_{-}}u_{-}^{2}\dx
}\\[.1cm] \label{chain1}
&\ge
\frac{
\ds \int_{\Omega_{+}\diesis}|\nabla u\diesis_{+}|^{2}\dx +\ds \int_{\Omega_{-}\diesis}|\nabla u_{-}\diesis|^{2}\dx+
\alpha\left| \int_{\Omega_{+}\diesis}(u_{+}\diesis)^{2}\dx-\int_{\Omega_{-}\diesis}(u_{-}\diesis)^{2}\dx   \right|
}
{\ds \int_{\Omega_{+}\diesis}(u_{+}\diesis)^{2}\dx+\int_{\Omega_{-}\diesis}(u_{-}\diesis)^{2}\dx 
}
\\[.1cm] \notag
& \ge
\min_{
\substack{w\in W_{0}^{1,2}(\Omega_{+}\diesis)\\z\in W_{0}^{1,2}(\Omega_{-}\diesis)}
}\frac{
\ds \int_{\Omega_{+}\diesis}|\nabla w|^{2}\dx +\ds \int_{\Omega_{-}\diesis}|\nabla z|^{2}\dx+
\alpha\left| \int_{\Omega_{+}\diesis}|w|w\dx+\int_{\Omega_{-}\diesis}|z|z\dx   \right|
}
{\ds
\int_{\Omega_{+}\diesis} |w|^2\dx+\int_{\Omega_{-}\diesis}|z|^2\dx  
}
\\[.1cm] \label{chain3}
&
\ge 
\inf_{A\in\mc B(|\Omega|)} \mu(A;\alpha).
\end{align}
If $u_{+}$ or $u_{-}$ is not radially symmetric, then the inequality \eqref{chain1} is strict. Moreover, if $u_{+}$ and $u_{-}$ are both radially decreasing functions, then $\Omega_{+}$ and $\Omega_{-}$ are balls such that $|\Omega_{+}|+|\Omega_{-}|<|\Omega|$, being $\Omega\not \in \mc B(|\Omega|)$. The monotonicity of $\mu(\,\cdot\,;\alpha)$ with respect to homotheties gives that in this case \eqref{chain3} is strict.

The arguments just used also give \eqref{minmin}.
\end{proof}
In order to conclude the proof of Theorem \ref{mainthm}, we recall an isoperimetric inequality for $\lambda_{T}(\Omega)$ given in \cite{n12}, which assures that if $B_{1}$, $B_{2}$ are disjoint balls with $|B_{1}|=|B_{2}|=|\Omega|/2$, then
\[
\lambda_{T}(\Omega) \ge \lambda_{T}(B_{1}\cup B_{2}).
\]
\begin{proof}[Proof of Theorem \ref{mainthm}]
If $\alpha\le 0$, being $\mu(\Omega,\alpha)=\lambda_\Delta (\Omega)+\alpha$
the result is given by the well-known Faber-Krahn inequality, which follows immediately from the P\'olya-Szeg\"o principle and the properties of rearrangements:
\begin{equation}
\label{naza}
\mc Q(u,\alpha) \ge \mc Q(u\diesis,\alpha)\ge \mu(\Omega\diesis,\alpha). 
\end{equation}
Then we can assume that $\alpha> 0$. 

Proposition \ref{propiso} allows to restrict to the case $\Omega\in \mc B(|\Omega|)$. We denote by $\Omega_{d}$ the union of two disjoint balls with same measure, equal to $|\Omega|/2$.

Then the proof is completed by observing that, by Proposition \ref{charballs}, and the Faber-Krahn inequality and \eqref{naza}, each eigencurve $\alpha\mapsto\mu(\Omega,\alpha)$, $\alpha\ge 0$, is such that $\mu(\Omega,0)\ge \mu(\Omega\diesis,0)=\lambda_\Delta (\Omega\diesis)$, then it increases linearly until it reaches the value $\lambda_{T}(\Omega)$ which is greater than $\lambda_{T}(\Omega_{d})$ (see also Figure \ref{1}). More precisely, the eigencurve $\alpha\mapsto\mu(\Omega,\alpha)$ is above the curve
\[
\alpha\mapsto 
	\begin{cases} 
	\mu(\Omega\diesis,\alpha) &\text{if }\alpha |\Omega|^{2/n} \le \alpha_{c},\\
	\mu(\Omega_{d},\alpha) &\text{if }\alpha |\Omega|^{2/n} \ge \alpha_{c},
	\end{cases}
\]
obtaining \eqref{mainin}.
\end{proof}

\begin{figure}[h]
\begin{tikzpicture}[scale=.5,line cap=round,line join=round,>=triangle 45,x=1.2cm,y=1.2cm]
\draw[-stealth,color=black] (-.5,0) -- (16,0) node[anchor=north west] {$\alpha$};
\draw[-stealth,color=black] (0,-.5) -- (0,13);
\draw (-.8,1.7) circle (.7cm); 
\draw (-2.1,1.7) node[anchor=east]{$\mu(\Omega\diesis,0)=\lambda_\Delta (\Omega\diesis)$};
\draw (-2.2,7) node[anchor=east]{$\mu(\Omega_{d},0)=\lambda_{T}\left(\Omega_{d}\right)$};
\draw (-1.8,6) circle (.41cm); \draw(-.8,6) circle (.50cm);
\draw[loosely dashed,thick] (0,6.4) -- (1.5,7.9);
\draw[loosely dashed,thick] (1.5,7.9) -- (16,7.9);
\draw (-.8,7) circle (.49cm); \draw (-1.8,7) circle (.49cm);
\draw (-.1,2) -- (0.05,2);
\draw[dashed,thick] (-.1,7) -- (5,7);
\draw[dotted] (5,-.1) node[anchor=north]{$\alpha_{c}$} -- (5,7);
\draw[very thick] (0,2) -- (5,7);
\draw[dashed] (4,6) -- (7,9);
\draw[dotted] (7,9) -- (7,-.1) node[anchor=north]{$\tilde\alpha_{\Omega\diesis}$};
\draw[dashed] (7,9) -- (16,9);
\draw[very thick] (5,7) -- (16,7);
  \path (9,9.05) coordinate (n);
      \draw[black] (11,10)  node (a) {
        $\lambda_{T}(\Omega\diesis)$};
      \path[black,-stealth] (a) edge[in=50,out=170] (n);
\end{tikzpicture}
\caption{A scheme of the eigencurves $\alpha\mapsto\lambda(\Omega,\alpha)$, $\alpha\ge0$, when $|\Omega|=\kappa$ is fixed. If $\Omega$ corresponds to $\Omega_{d}$ union of two disjoint balls of equal measure, then $\mu(\Omega_{d},\alpha)$ is constant for $\alpha \ge 0$. Otherwise, $\mu(\Omega,\alpha)$ increases until it reaches its maximum value $\mu(\Omega,\alpha_{\Omega})=\lambda_{T}(\Omega)$ in $\tilde\alpha_{\Omega}=\lambda_{T}(\Omega)-\lambda_{\Delta}(\Omega)$, then it is constant for $\alpha\ge \tilde\alpha_{\Omega}$. The solid line represents the values of $\ds\min_{|\Omega|=\kappa}\mu(\Omega,\alpha)$.}
\label{1}
\end{figure}
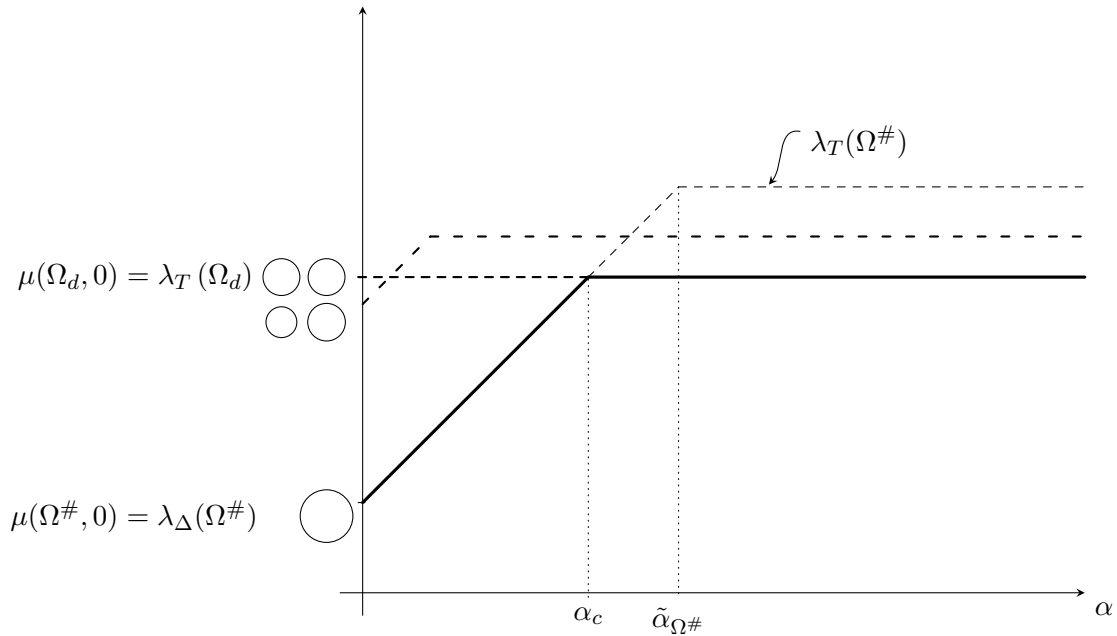

\thanks{{\bf Acknowledgement.} The author has been partially supported by 
Gruppo Nazionale per l'Analisi Matematica, la Probabilit\`a e le loro Applicazioni (GNAMPA) of the Istituto Nazionale di Alta Matematica (INdAM), project ``Dise\-guaglianze funzionali e problemi so\-vra\-de\-ter\-mi\-na\-ti'' 2013.}

\end{document}